\definecolor{darkgreen}{rgb}{0, 0.5, 0}
\theoremstyle{plain}
\newtheorem{dummy}{anything}[section]
\newtheorem{theorem}[dummy]{Theorem}
\newtheorem{lemma}[dummy]{Lemma}
\newtheorem{question}[dummy]{Question}
\newtheorem{proposition}[dummy]{Proposition}
\newtheorem{corollary}[dummy]{Corollary}
\theoremstyle{definition}
\newtheorem{remark}[dummy]{Remark}
\theoremstyle{remark}
\newcommand{\C}{\mathbb{C}}
\def\rr{\bold r}
\def\b{\beta}
\def\s{\sigma}
\begin{document}

\title {A characterization of quasipositive two-bridge knots}

\author{\hspace{3cm} Burak Ozbagci \newline (\MakeLowercase{with an Appendix by} Stepan Orevkov)}

\address{Institut de Math\'{e}matiques de Toulouse, Universit\'{e} Paul Sabatier, 118 route de Narbonne, 31062 Toulouse, France}

\address{Steklov Mathematical Institute, Gubkina 8, Moscow, Russia}

\email{stepan.orevkov@math.univ-toulouse.fr}

\address{Department of Mathematics, Ko\c{c} University, 34450, Istanbul,Turkey}
\email{bozbagci@ku.edu.tr}

\subjclass[2000]{}
\thanks{}


\begin{abstract} We prove a simple necessary and sufficient condition for a two-bridge knot $K(p,q)$ to be quasipositive, based on the continued fraction expansion of $p/q$. As an application, coupled with some classification results in contact and symplectic topology, we give a new proof of the fact that smoothly slice two-bridge knots are non-quasipositive.  Another proof of this fact using methods within the scope of knot theory is presented in the Appendix.
 \end{abstract}

\maketitle

\section{Introduction}

Notions of quasipositivity and strong quasipositivity for links were introduced and explored by Rudolph in a series of papers (see, for example, \cite{r1,r, r3, r4, r2, ru}).
Let $\s_1, \ldots, \s_{n-1}$ denote the standard generators  of the braid group $B_n$,  and let $\s_{i,j} = (\s_i \ldots \s_{j-2})(\s_{j-1}) (\s_i \ldots \s_{j-2})^{-1}.$ Strongly quasipositive links are the links which can be realized as the closure of braids of the form $\prod_{k=1}^{m} \s_{i_k, j_k}$. The weaker notion of  quasipositive link is any link which can be realized as the closure of a braid of the form $\prod_{k=1}^{m} w_k \s_{i_k} w_k^{-1}$, where $w_k \in B_n$ for all  $1 \leq k \leq m$.  An  oriented link is called positive if it has a positive diagram, i.e., a diagram in which all crossings are positive.

Throughout this paper, we assume that $p > q$ are relatively prime positive integers. The oriented lens space $L(p,q)$ is defined by $-p/q$ surgery on the unknot in $S^3$ and it is well-known that $L(p,q)$ is the double cover of $S^3$ branched along  the two-bridge link  $K(p,q)$, which we depicted in Figure~\ref{fig: 2bridge}.  When $p$ is odd, $K(p,q)$ is a knot, and otherwise it has two components.

The
{\em negative} continued fraction of $p/q$ is defined  by
$$ \frac {p}{q} = [a_1,a_2,\ldots,a_k]^{-} = a_1-
\cfrac{1}{a_2- \cfrac{1}{\ddots- \cfrac{1}{a_k}}} \;, \qquad a_i\geq 2
\text{ for all $1\leq i\leq k,$} $$ where the coefficients $a_1,a_2,\ldots,a_k$ are uniquely determined by $p/q$.  We say that the negative continued fraction of $p/q$ is even if $a_i$ is even for all $1\leq i\leq k$.

\begin{theorem}\label{thm: main}
 If the two-bridge link $K(p,q)$ is  quasipositive then the negative continued fraction of $p/q$ is even. \end{theorem}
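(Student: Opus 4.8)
The plan is to translate the quasipositivity of $K(p,q)$ into the existence of a holomorphically fillable structure on the lens space $L(p,q)$, and then to read off the parity of the $a_i$ from the intersection form of the canonical filling. I would begin with Rudolph's description of quasipositive links as transverse $\C$--boundaries: since $K=K(p,q)$ is quasipositive, it bounds a smooth piece of complex curve $F$ properly embedded in the standard symplectic ball $B^4\subset\C^2$, with $\partial F=K$ transverse to the standard contact structure $\xi_{\mathrm{std}}$ on $S^3=\partial B^4$. For the parity statement only the \emph{existence} of $F$ as a complex curve will matter, not the sharpness of the slice--Bennequin inequality.

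Next I would pass to four dimensions by forming the double cover $W:=\Sigma_2(B^4,F)$ of $B^4$ branched along $F$. Because $F$ is holomorphic and $B^4$ carries its standard Stein structure, the branched cover of a Stein domain along a complex curve is again Stein, so $W$ is a Stein filling of its boundary $\partial W=\Sigma_2(S^3,K)=L(p,q)$ with the contact structure $\xi$ obtained by lifting $\xi_{\mathrm{std}}$. A separate point, which I would justify by naturality of the branched--cover construction at the level of complex structures (for the unknot it recovers $(S^3,\xi_{\mathrm{std}})=(L(1,q),\xi_{\mathrm{can}})$), is that $\xi$ is the \emph{canonical} contact structure $\xi_{\mathrm{can}}$ on $L(p,q)$, the one filled by the negative--definite linear plumbing $X(p,q)$ along the chain of spheres with self--intersections $-a_1,\dots,-a_k$. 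By McDuff's theorem every symplectic filling of a lens space is negative definite, so $W$ is negative definite as well.

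The arithmetic reformulation I would exploit is that the negative continued fraction of $p/q$ is even \emph{if and only if} $X(p,q)$ is spin: its intersection form is the tridiagonal integer matrix with diagonal $-a_1,\dots,-a_k$ and off--diagonal entries $1$, and such a form is even exactly when every $a_i$ is even (the cross terms are automatically even, so parity is governed by the diagonal). Thus the theorem reduces to showing that $X(p,q)$ admits a spin structure. To access this I would compute the first Chern class of the Stein filling by the branching formula $c_1(W)=-[\widetilde F]$, where $\widetilde F\subset W$ is the ramification surface over $F$; since $\widetilde F$ is a complex curve it is genus--minimizing in its class by adjunction, which pins down $c_1(W)$ and $\chi(W)$ in terms of $K$. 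I would then feed this specific filling into Lisca's classification of the minimal symplectic fillings of $(L(p,q),\xi_{\mathrm{can}})$, together with the diagonalization of the closed negative--definite manifold obtained by capping off $W$, in order to force the parity of the canonical plumbing.

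The main obstacle is precisely this last transfer of information. Because $p$ is odd the lens space carries a unique spin structure and $H^2(L(p,q);\Z/2)=0$, so the parity of a filling is invisible on the boundary $L(p,q)$; moreover $X(p,q)$ is always \emph{a} filling, and distinct symplectic fillings of the same contact lens space can have different parities (blow--ups being the obvious example), so merely producing the filling $W$ is not enough. The crux is therefore to show that the particular Stein filling coming from the branched cover is diffeomorphic to $X(p,q)$, or at least shares its parity, and this is where the fine structure of Lisca's enumeration—rather than Donaldson's theorem alone—must be invoked. I expect the link case ($p$ even, two spin structures on $L(p,q)$) to need minor additional bookkeeping, but the same scheme to go through.
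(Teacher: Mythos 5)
Your setup (Rudolph's complex curve $F\subset B^4$, the branched double cover $W=\Sigma_2(B^4,F)$ as a Stein filling of $(L(p,q),\xi)$) matches the paper's starting point, but the argument then heads in a direction that does not close, and you say so yourself. The reduction you propose --- ``all $a_i$ even $\iff$ the linear plumbing $X(p,q)$ is spin,'' to be established by comparing the intersection form of $W$ with that of $X(p,q)$ --- founders on exactly the obstacle you identify: parity of the intersection form is not a boundary invariant, $W$ need not be diffeomorphic to $X(p,q)$, and for $p$ odd nothing on $L(p,q)$ itself ($H^2(L(p,q);\Z/2)=0$) remembers the parity of a filling. ``Feed this filling into Lisca's enumeration to force the parity of the canonical plumbing'' is the entire content of the theorem at that point, and no mechanism is given for it. There is also an unjustified (and unnecessary) claim along the way: that the lifted contact structure $\xi$ is $\xi_{can}$. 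The contact structure on the branched double cover depends on the transverse representative of $K$, and identifying it with $\xi_{can}$ is not a formal naturality statement; in the paper this identification is only obtained in a special situation (the slice case, Corollary~\ref{cor: slice}) and requires the Golla--Starkston theorem that virtually overtwisted lens spaces admit no rational homology ball filling.

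The missing idea is to replace the four-dimensional parity invariant by a three-dimensional one: the first Chern class of the contact structure $\xi$ on $L(p,q)$ itself. Plamenevskaya \cite[Proposition 1.4 and Lemma 5.1]{p} shows that for a quasipositive transverse representative the contact double cover is Stein fillable \emph{with $c_1(\xi)=0$}. On the other hand, Honda's classification realizes every tight contact structure on $L(p,q)$ by Legendrian surgery on the chain of unknots with framings $-a_1,\dots,-a_k$, and the paper's Proposition~\ref{prop: chern} shows that some such structure has vanishing $c_1$ if and only if every $a_i$ is even (via the symmetry $\xi\mapsto\overline{\xi}$ and Gompf's criterion that an oriented plane field is homotopic to its reverse iff $c_1=0$). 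This sidesteps every difficulty you ran into: no identification of $\xi$ with $\xi_{can}$, no comparison of $W$ with $X(p,q)$, and no appeal to Lisca's classification of fillings. Your formula $c_1(W)=-[\widetilde F]$ is actually the germ of Plamenevskaya's computation --- restricting it to $\partial W$ would give you $c_1(\xi)$ --- so the fix is to push that thread instead of the intersection-form one.
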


If  the negative continued fraction of $p/q$ is even, then $pq$ must be even. Corollary~\ref{cor: odd} immediately follows from this simple observation coupled with Theorem~\ref{thm: main}.

\begin{corollary} \label{cor: odd} If $pq$ is odd, then the two-bridge knot $K(p,q)$ is non-quasipositive.
\end{corollary}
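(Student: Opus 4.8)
The plan is to deduce Corollary~\ref{cor: odd} formally from Theorem~\ref{thm: main}, so the only content that needs an argument is the elementary arithmetic fact stated just before the corollary: \emph{if the negative continued fraction $p/q=[a_1,\dots,a_k]^-$ has all of its entries $a_i$ even, then $pq$ is even}. Granting this fact, the corollary is immediate by contraposition: if $pq$ is odd, then $p/q$ cannot have an even negative continued fraction, hence by Theorem~\ref{thm: main} the link $K(p,q)$ is not quasipositive. (Note that $pq$ odd forces $p$ odd, so $K(p,q)$ is genuinely a knot, consistent with the statement.)

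To prove the arithmetic fact I would use the standard matrix encoding of negative continued fractions. Put $M_i=\bigl(\begin{smallmatrix} a_i & -1 \\ 1 & 0 \end{smallmatrix}\bigr)$ and check, by induction on $k$, that $M_1 M_2\cdots M_k=\bigl(\begin{smallmatrix} p_k & -p_{k-1} \\ q_k & -q_{k-1}\end{smallmatrix}\bigr)$, where $p_j/q_j=[a_1,\dots,a_j]^-$ is in lowest terms (with $p_0=1$, $q_0=0$); in particular $\binom{p}{q}=\binom{p_k}{q_k}$ is the first column of $M_1\cdots M_k$. Now reduce modulo $2$: if each $a_i$ is even, then each $M_i$ is congruent mod $2$ to $J=\bigl(\begin{smallmatrix} 0 & 1 \\ 1 & 0 \end{smallmatrix}\bigr)$, so $M_1\cdots M_k\equiv J^k\pmod 2$. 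Since $J^2=I$, the matrix $J^k$ is either $I$ or $J$, and in both cases its first column is one of the two standard basis vectors of $(\Z/2)^2$. Hence $(p,q)\equiv(1,0)$ or $(0,1)\pmod 2$; in particular exactly one of $p,q$ is even, so $pq$ is even.

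If one prefers to avoid matrices, the same fact follows by a direct induction on $k$ from the identity $[a_1,\dots,a_k]^- = a_1 - 1/[a_2,\dots,a_k]^-$: writing $[a_2,\dots,a_k]^- = p'/q'$ in lowest terms with exactly one of $p',q'$ odd, one gets $[a_1,\dots,a_k]^- = (a_1 p' - q')/p'$, which is again in lowest terms since $\gcd(a_1p'-q',p')=\gcd(q',p')=1$, and since $a_1$ is even the pair $(a_1 p'-q',\,p')$ is congruent mod $2$ to $(q',p')$, so it still has exactly one odd entry. The base case $k=1$ is $[a_1]^-=a_1/1$ with $a_1$ even.

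I do not expect any real obstacle here: once Theorem~\ref{thm: main} is available the corollary is a one-line parity observation. The only point requiring a small amount of care is to pin down the correct sign conventions in the matrix identity and to note that the reduction mod $2$ does not degenerate (it cannot, since $\det M_i=1$ for every $i$).
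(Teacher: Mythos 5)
Your proposal is correct and follows exactly the paper's route: the paper deduces the corollary from Theorem~\ref{thm: main} together with the observation that an all-even negative continued fraction forces $pq$ to be even, which it states without proof as ``simple.'' Your mod-$2$ matrix (or inductive) verification of that parity fact is sound and merely fills in the detail the paper leaves to the reader.
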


Conversely, we can express Tanaka's quasipositivity criterion \cite[Proposition 3.1]{t} for two-bridge knots in terms of negative continued fractions as follows:

\begin{proposition}[Tanaka \cite{t}] \label{prop: tan} The two-bridge knot $K(p,q)$ is  strongly quasipositive, provided that  the negative continued fraction of $p/q$ is even. \end{proposition}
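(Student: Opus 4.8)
The plan is to start from the explicit band-generator presentation that Tanaka constructs for two-bridge knots and rewrite its hypothesis in the language of negative continued fractions. Tanaka works with the ordinary (positive) continued fraction expansion and a specific plat/plumbing picture; his criterion says that $K(p,q)$ is strongly quasipositive when a certain parity condition holds on the coefficients of that expansion. So the first step is purely arithmetic: relate the even negative continued fraction $p/q = [a_1,\dots,a_k]^-$ with all $a_i$ even to Tanaka's parity hypothesis. The standard translation between $[a_1,\dots,a_k]^-$ and the positive continued fraction $[b_1,\dots,b_\ell]^+$ of $p/q$ (or of $p/q'$ for a suitable partner $q'$ with $qq'\equiv \pm1 \pmod p$) is given by the well-known substitution rules (each $a_i \geq 3$ contributes a run of $2$'s, each run of consecutive $2$'s gets collapsed, etc.), and I would check that ``all $a_i$ even'' is exactly the condition under which Tanaka's construction produces a strongly quasipositive band word.

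Concretely, the key steps in order: (1) recall the negative/positive continued fraction dictionary and verify that ``$p/q$ has even negative continued fraction'' is preserved under the (finitely many) moves relating it to the data Tanaka uses; (2) exhibit, from $[a_1,\dots,a_k]^-$ with $a_i = 2c_i$, the associated two-bridge knot as a plumbing/Murasugi sum of Hopf bands (or directly as a closed braid), reading off a band presentation $K(p,q) = \widehat{\prod_k \s_{i_k,j_k}}$ in which every band generator $\s_{i_k,j_k}$ appears with positive exponent; (3) invoke Tanaka's \cite[Proposition 3.1]{t} verbatim once the hypotheses are matched, or, if a self-contained argument is preferred, check directly that the band word obtained in (2) is a product of the positive band generators $\s_{i,j}$, which is the definition of strong quasipositivity. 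The orientation bookkeeping in (2) — making sure the Seifert surface one reads off from the even negative continued fraction is genuinely a plumbing of \emph{positive} Hopf bands rather than a mix of positive and negative ones — is where the evenness of the $a_i$ enters decisively, since the parity of each $a_i$ controls the local writhe/linking sign at the corresponding plumbing region.

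The main obstacle I anticipate is step (1): Tanaka's paper is phrased in conventions (positive continued fractions, a particular normal form for the two-bridge knot, possibly a choice between $q$ and its inverse mod $p$) that do not line up with the negative-continued-fraction setup used here, so the bulk of the work is a careful, convention-chasing verification that ``even negative continued fraction'' is the precise image of Tanaka's hypothesis under the standard change of normal form. Once that identification is nailed down, the proposition is essentially a restatement of \cite[Proposition 3.1]{t}, and steps (2)–(3) are routine. I would therefore present the proof as: state the dictionary, do the parity computation, and then cite Tanaka.
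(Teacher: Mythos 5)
Your proposal follows essentially the same route as the paper: the paper's proof is exactly a convention-chasing reduction to Tanaka's Proposition 3.1, carried out by converting the regular continued fraction $[c_1,\ldots,c_{2m+1}]$ of $p/q$ into the negative continued fractions of $p/q$ and of $p/(p-q)$ (Lemma~\ref{lem: convert}, via Riemenschneider's point diagram), and by tracking the mirror-image discrepancy between Tanaka's $\mathcal{C}(c_1,\ldots,c_{2m+1})$ and the $K(p,q)$ used here --- precisely the dictionary and orientation bookkeeping you flag as the main work. The partner you should take is $p-q$ (the mirror), and the evenness of the negative expansion of $p/(p-q)$ corresponds exactly to Tanaka's condition that the even-indexed $c_i$ be even; your optional step of re-deriving the band presentation from a Hopf-band plumbing is not needed in the paper, which cites Tanaka verbatim once the conventions are matched.
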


We would like to point out  that Proposition~\ref{prop: tan} cannot possibly hold for arbitrary two-bridge links.  For example, $16/3 =[6,2,2]^{-}$ and $16/13=[2,2,2,2,4]^{-}$,   but the two-bridge link $K(16,3)$ and its mirror  $K(16,13)$ cannot both be quasipositive since Hayden \cite[Corollary 1.5]{h} proved that if a link and its mirror are both quasipositive, then the link is an
unlink.

Combining Theorem~\ref{thm: main} and Proposition~\ref{prop: tan}, we have the following characterization  of  quasipositive two-bridge knots.

\begin{corollary} \label{cor: even} The two-bridge knot $K(p,q)$ is quasipositive if and only if the negative continued fraction of $p/q$ is even.
\end{corollary}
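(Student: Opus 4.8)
The plan is simply to splice together the two results already established, so the argument will be very short. The one preliminary observation I would record is that every strongly quasipositive link is quasipositive: in the band generator $\s_{i,j}=(\s_i\cdots\s_{j-2})(\s_{j-1})(\s_i\cdots\s_{j-2})^{-1}$ one recognizes a conjugate $w\s_{j-1}w^{-1}$ of the standard positive generator $\s_{j-1}$, with $w=\s_i\cdots\s_{j-2}$. Hence any braid of the form $\prod_{k=1}^m\s_{i_k,j_k}$ is automatically a quasipositive braid word in the sense of the introduction, and its closure is a quasipositive link.

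Granting this remark, the forward implication is precisely Theorem~\ref{thm: main}: if the two-bridge knot $K(p,q)$ is quasipositive, then the negative continued fraction of $p/q$ is even. For the converse, I would invoke Proposition~\ref{prop: tan} (Tanaka's criterion): if the negative continued fraction of $p/q$ is even, then $K(p,q)$ is strongly quasipositive, and therefore quasipositive by the observation above. Combining the two implications gives the stated equivalence.

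There is no genuine obstacle here; the mathematical content of the corollary resides entirely in Theorem~\ref{thm: main} on the one hand and in Tanaka's proposition on the other. The only point one must not overlook is the elementary implication ``strongly quasipositive $\Rightarrow$ quasipositive,'' which is immediate from the definitions recalled in the introduction.
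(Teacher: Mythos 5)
Your proposal is correct and matches the paper's own argument exactly: the corollary is obtained by combining Theorem~\ref{thm: main} (quasipositive $\Rightarrow$ even continued fraction) with Proposition~\ref{prop: tan} (even continued fraction $\Rightarrow$ strongly quasipositive), together with the immediate observation that strong quasipositivity implies quasipositivity. Nothing further is needed.
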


\begin{remark} \label{rem: equiv} It is clear that strong quasipositivity implies quasipositivity by definition. Conversely, Boileau and Rudolph \cite[Proposition 3.6 and Corollary 3.7]{br}  proved for a family of alternating arborescent links, including 2-bridge links, that quasipositivity implies strong quasipositivity. On the other hand, by the work of Rudolph \cite{r2} and Nakamura \cite[Lemma 4.1]{n}, positive diagrams represent strongly quasipositive links. Moreover, strongly quasipositive two-bridge links are positive, since any two-bridge link can be obtained as the boundary of plumbings of annuli and strong quasipositivity behaves naturally  with respect to the plumbing operation \cite{r4}. The upshot is that, positivity, quasipositivity and strong quasipositivity are in fact equivalent  for two-bridge links. \end{remark}

The proof of Theorem~\ref{thm: main}  is based on the work of Plamenevskaya \cite{p}, coupled with the following result on contact topology.

 \begin{proposition}\label{prop: chern}
The lens space $L(p,q)$ admits a tight contact structure with trivial first Chern class  if and only if the negative continued fraction of $p/q$ is even. \end{proposition}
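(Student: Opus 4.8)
The plan is to combine Honda's classification of tight contact structures on lens spaces with an elementary estimate for the plumbing lattice. Write $p/q=[a_1,\dots,a_k]^{-}$ with all $a_i\ge 2$ and let $X=X(a_1,\dots,a_k)$ be the linear plumbing of disk bundles over $S^2$ with Euler numbers $-a_1,\dots,-a_k$. With the orientation convention of this paper ($L(p,q)=-p/q$ surgery on the unknot) one has $\partial X=L(p,q)$, and the intersection form $Q$ of $X$, written in the basis $S_1,\dots,S_k$ of $H_2(X;\Z)$ given by the zero sections, is the negative definite tridiagonal matrix with $Q_{ii}=-a_i$ and $Q_{i,i+1}=Q_{i+1,i}=1$. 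Since $H_1(X;\Z)=0$, Poincar\'e--Lefschetz duality identifies $H^2(L(p,q);\Z)$ with $\operatorname{coker}\bigl(Q\colon\Z^k\to\Z^k\bigr)$, and $H^2(X;\Z)$ with $\operatorname{Hom}(H_2(X;\Z),\Z)\cong\Z^k$.

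First I would recall from Honda's classification that every tight contact structure on $L(p,q)$ arises by Legendrian surgery on a chain of Legendrian unknots realizing the surgery diagram of $X$, the $i$-th component being a stabilized unknot with $tb=1-a_i$; its rotation number $r_i$ may be any integer with $|r_i|\le a_i-2$ and $r_i\equiv a_i\pmod 2$, and the resulting contact structure $\xi(\mathbf r)$, $\mathbf r=(r_1,\dots,r_k)$, is Stein fillable (hence tight) with filling $X$ and $c_1(X,J)=\mathbf r$ in $\Z^k$. Hence $c_1\bigl(\xi(\mathbf r)\bigr)$ is the class $[\mathbf r]\in\operatorname{coker}(Q)$, so $c_1\bigl(\xi(\mathbf r)\bigr)=0$ exactly when $\mathbf r\in Q\,\Z^k$. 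If all $a_i$ are even, then $\mathbf r=\mathbf 0$ is an admissible rotation vector, so $\xi(\mathbf 0)$ is a tight contact structure on $L(p,q)$ with trivial first Chern class; this settles the ``if'' direction.

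The converse rests on the following lemma: \emph{if $\mathbf r=Q\mathbf x$ for some $\mathbf x\in\Z^k$ and $|r_i|\le a_i-2$ for every $i$, then $\mathbf r=\mathbf 0$.} Indeed, expanding the quadratic form gives
\[
-\,\mathbf x^{T}Q\mathbf x=\sum_{i=1}^{k}(a_i-2)\,x_i^2+x_1^2+x_k^2+\sum_{i=1}^{k-1}(x_i-x_{i+1})^2 ,
\]
while at the same time $-\mathbf x^{T}Q\mathbf x=-\mathbf x^{T}\mathbf r\le\bigl|\sum_i x_i r_i\bigr|\le\sum_i(a_i-2)\,|x_i|\le\sum_i(a_i-2)\,x_i^2$, using $a_i\ge 2$ and $x_i^2\ge|x_i|$. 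Comparing the two expressions forces $x_1^2+x_k^2=0$ and $x_i=x_{i+1}$ for all $i$, whence $\mathbf x=\mathbf 0$ and $\mathbf r=\mathbf 0$. Now if $L(p,q)$ admits a tight contact structure with vanishing first Chern class, it is some $\xi(\mathbf r)$ with $\mathbf r\in Q\,\Z^k$; the lemma gives $\mathbf r=\mathbf 0$, and then $0=r_i\equiv a_i\pmod 2$ shows that every $a_i$ is even, i.e. the negative continued fraction of $p/q$ is even.

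I expect the main point requiring care to be the bookkeeping rather than any hard input: fixing the sign of the plumbing matrix so that $X$ is negative definite with $\partial X=L(p,q)$ in this paper's surgery convention, and pinning down the identification of $c_1(\xi)$ with $[\mathbf r]$ in $\operatorname{coker}(Q)$ through the handle-attachment formula for the first Chern class of a Stein domain together with the compatibility of Honda's normalizations with these. Once these are in place the lemma is entirely elementary, and Honda's classification is invoked only to guarantee that an arbitrary tight contact structure on $L(p,q)$ is one of the $\xi(\mathbf r)$.
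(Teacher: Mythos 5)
Your proof is correct, and the ``only if'' direction takes a genuinely different route from the paper's. The ``if'' direction coincides with the paper's: realize the chain with all rotation numbers zero and invoke Gompf's rotation-number formula for $c_1$ of the Stein filling. For the converse, the paper does not compute $c_1(\xi)$ at all: it takes an arbitrary Legendrian realization $\mathcal L$, observes that mirroring every component negates all rotation numbers and reverses the coorientation of the resulting contact structure, and then argues that if some $a_j$ is odd the rotation vector cannot equal its own negative, so by Honda's classification $\xi$ and $\overline\xi$ are non-isotopic, hence (again by Honda) non-homotopic as plane fields, which by Gompf's Corollary 4.10 forces $c_1(\xi)\neq 0$. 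You instead identify $c_1(\xi(\mathbf r))$ with $[\mathbf r]\in\operatorname{coker}(Q)$ via Poincar\'e--Lefschetz duality and prove an elementary lattice lemma: the quadratic-form identity and the estimate $-\mathbf x^TQ\mathbf x=-\mathbf x^T\mathbf r\le\sum_i(a_i-2)x_i^2$ do force $x_1=x_k=0$ and $x_i=x_{i+1}$, so the only admissible rotation vector in the image of $Q$ is $\mathbf 0$, and the parity constraint $r_i\equiv a_i\pmod 2$ then yields that all $a_i$ are even (the degenerate case $k=1$ also checks out). Both arguments rest on Honda's classification to know that every tight structure on $L(p,q)$ is one of the $\xi(\mathbf r)$. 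Your version is more computational but also more informative --- it actually locates $c_1(\xi(\mathbf r))$ in $\operatorname{coker}(Q)$ and needs only Honda's classification plus the handle-attachment formula for $c_1$, at the cost of the convention-matching you rightly flag; the paper's version avoids all lattice bookkeeping by outsourcing the work to Honda's homotopy statement and Gompf's criterion for a plane field to be homotopic to its reverse.
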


Recall that a knot in $S^3$ is called {\em smoothly slice} if it bounds a smooth properly embedded disk in $B^4$. Rudolph \cite[Proposition 2]{r3} showed that the only smoothly slice strongly quasipositive knot is the unknot, as a corollary to the celebrated work of Kronheimer and Mrowka \cite{km}. As we pointed out in Remark~\ref{rem: equiv}, quasipositivity and strong quasipositivity are equivalent for a two-bridge link.  Therefore, we conclude that smoothly slice two-bridge knots are non-quasipositive. We will provide a new proof of this fact as an application of Corollary~\ref{cor: even} coupled with the following result on symplectic topology.

\begin{proposition} \label{prop: stein}   Let $(Y, \xi)$  be the contact double cover of the standard contact $3$-sphere $(S^3, \xi_{st})$ branched along the knot $K$ which we assume to be transverse to $\xi_{st}$. If $K$ is smoothly slice and quasipositive, then $(Y, \xi)$ admits a rational homology ball Stein filling.  \end{proposition}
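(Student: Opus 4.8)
The plan is to exhibit $(Y,\xi)$ as the contact boundary of the double branched cover of the Stein $4$-ball along a \emph{complex} slice disk for $K$, and to observe that this branched cover is simultaneously a Stein domain and a rational homology ball.

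First I would produce a complex slice disk for $K$. Since $K$ is quasipositive and transverse to $\xi_{st}$, the identification of quasipositive links with transverse $C$-links (Rudolph, and in the smooth-algebraic form Boileau--Orevkov) lets us realize $K$, up to transverse isotopy, as $V\cap S^3$ for a smooth algebraic curve $V\subset\C^2$ meeting $S^3$ transversally; set $F:=V\cap B^4$, a smooth, properly embedded piece of a complex curve with $\partial F=K$. Being a complex curve in the standard symplectic $4$-ball, $F$ is genus-minimizing among properly embedded surfaces in $B^4$ bounding $K$, by Rudolph's slice--Bennequin inequality -- a consequence of the Kronheimer--Mrowka resolution of the local Thom conjecture -- so that $g(F)=g_4(K)$. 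As $K$ is smoothly slice, $g_4(K)=0$; hence $F$ is a (complex) slice disk.

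Next I would study the double branched cover $W:=\Sigma_2(B^4,F)$, establishing two properties. Homologically, since $F$ is a properly embedded disk, a transfer / Mayer--Vietoris argument gives $H_*(W;\mathbb{Q})\cong H_*(B^4;\mathbb{Q})$, so $W$ is a rational homology ball with $\partial W=\Sigma_2(S^3,K)$. Symplectically, write $V=P^{-1}(0)$ for a polynomial $P$; since $V$ is smooth, $0$ is a regular value of $P$ along $F$, and $W$ is biholomorphic to $\{(z,w)\in B^4\times\C:\ w^2=P(z)\}$. As $|w|=|P(z)|^{1/2}$ is bounded on $W$, this is a closed complex submanifold of a bounded Stein region in $\C^3$ (a Stein domain obtained by smoothing the corners of $B^4\times D_R$), hence inherits a Stein domain structure; equivalently, one appeals to the general principle that the double cover of a Stein domain branched along the zero set of a holomorphic function is again a Stein domain. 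It remains to check that the contact structure this Stein structure induces on $\partial W=\Sigma_2(S^3,K)$ is precisely the canonical contact branched cover of $(S^3,\xi_{st})$ along the transverse knot $K$, namely $\xi$. Granting this, $W$ is the sought rational homology ball Stein filling of $(Y,\xi)$.

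The homology computation is routine, and the algebraic-geometric and gauge-theoretic inputs (Boileau--Orevkov; Rudolph / Kronheimer--Mrowka) are used as black boxes. The main obstacle is the final part of the third step: upgrading the branched cover to an honest compact Stein domain -- controlling the strictly plurisubharmonic exhaustion near $\partial W$ and the corner-smoothing of the ambient bidomain -- and, crucially, verifying that the induced contact structure on $\Sigma_2(S^3,K)$ is the canonical contact branched cover $\xi$ of the transverse knot $K$, rather than some other contact structure in the same homotopy class of $2$-plane fields.
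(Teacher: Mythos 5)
Your proposal follows essentially the same route as the paper: Rudolph's realization of quasipositive knots as transverse intersections of $S^3$ with complex curves, the Kronheimer--Mrowka local Thom conjecture to conclude that the piece of complex curve is itself a slice disk, and the double cover of $B^4$ branched along that complex disk as the candidate rational homology ball Stein filling. The step you single out as the main obstacle --- that this branched cover is an honest compact Stein domain whose induced contact structure on the boundary is the contact branched double cover $\xi$ of the transverse knot $K$, and not merely something in the same homotopy class --- is precisely the content of \cite[Theorem 3]{lp}, which the paper invokes at this point rather than carrying out the explicit $\{w^2=P(z)\}$ construction; with that citation your argument is complete.
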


After we finished a first draft of this paper, it came to our attention that in \cite[Remark 2]{o},   Orevkov observed that quasipositivity
  implies (by results of \cite{o} combined with \cite{deh})
  strong  quasipositivity for two-bridge links,
  and for a larger class of links including all
  alternating Montesinos links---which does not directly follow from the arguments in \cite[Proposition 3.6 and Corollary 3.7]{br}. Since we convinced him that his result would be of interest to knot enthusiasts, he kindly agreed to write the details of his  \cite[Remark 2]{o} in the Appendix (especially in the two-bridge case where
   \cite{mu1} can be used instead of \cite{deh}), which in turn,  gives yet another proof of the fact that smoothly slice two-bridge knots are non-quasipositive.

\section{Applications of  contact and symplectic topology}

We begin with the proof of Proposition~\ref{prop: chern}.

\begin{proof}[Proof of Proposition~\ref{prop: chern}]  Suppose that $p/q=[a_1,a_2,\ldots,a_k]^{-}$, where $a_i \geq 2$ is even for all $1\leq i\leq k$. Note that $L(p,q)$ can be obtained by surgery on a chain of unknots with framings $-a_1,-a_2,\ldots,-a_k$, respectively. Since  $a_i \geq 2$ is even for all $1\leq i\leq k$, we can Legendrian realize each unknot  in the chain, with respect to the standard contact structure,   so that the rotation number of each Legendrian unknot is zero. It follows by \cite[Proposition 2.3]{g} that the first Chern class of the Stein fillable (and hence tight) contact structure on $L(p,q)$ obtained by Legendrian surgery on the resulting Legendrian link is zero.

To prove the only if direction, suppose that $p/q=[a_1,a_2,\ldots,a_k]^{-}$, where $a_i \geq 2$ for all $1\leq i\leq k$ and $a_j$ is odd for some $1\leq j \leq k$. Let $\xi$ be the Stein fillable contact structure on $L(p,q)$ obtained by Legendrian surgery along an arbitrary Legendrian realization $\mathcal{L}$ of the chain of unknots with smooth framings $-a_1,-a_2,\ldots,-a_k$, respectively. Let $\overline{\mathcal{L}}$ be the Legendrian link obtained from $\mathcal{L}$ by taking the mirror image of each component and   let $\overline{\xi}$ be the Stein fillable contact structure on $L(p,q)$ obtained by Legendrian surgery along $\overline{\mathcal{L}}$. It follows that rotation number of each component of $\overline{\mathcal{L}}$ is the negative of the rotation number of the corresponding component of $\mathcal{L}$ and hence $\overline{\xi}$ is obtained from $\xi$ by reversing the orientation of the contact planes. By Honda's classification \cite[Theorem 2.1]{ho} of tight contact structures on $L(p,q)$, the contact structures  $\xi$ and $\overline{\xi}$ are not isotopic because of the assumption that $a_j$ is odd for some $1\leq j \leq k$. Note that non-isotopic tight contact structures on $L(p,q)$ are non-homotopic \cite[Proposition 4.24]{ho}. This implies that $c_1(\xi) \neq 0$ since according to Gompf \cite[Corollary 4.10]{g}, an oriented plane field in any closed oriented $3$-manifold is homotopic to itself with reversed orientation if and only if it has trivial first Chern class.
\end{proof}

We now give a proof of Theorem~\ref{thm: main},  based on Proposition~\ref{prop: chern}.

\begin{proof}[Proof of Theorem~\ref{thm: main}] The double cover of $S^3$ branched along $K(p,q)$ is $L(p,q)$. We can make $K(p,q)$ transverse to the standard contact structure $\xi_{st}$ by isotopy. Let $(L(p,q),  \xi)$ be the contact double cover of $(S^3, \xi_{st})$ branched along the transverse link $K(p,q)$. Suppose that $K(p,q)$ is quasipositive. By the work of Plamenevskaya \cite[Proposition 1.4 and Lemma 5.1]{p},  we conclude that $\xi$ is Stein fillable and moreover $c_1(\xi)=0$. It follows by Proposition~\ref{prop: chern} that
the negative continued fraction of $p/q$ must be  even.
\end{proof}

The proof of Proposition~\ref{prop: tan} is essentially obtained by rephrasing Proposition 3.1 in Tanaka's paper \cite{t}, where he uses regular continued fractions to describe a sufficient condition for a two-bridge knot to be strongly  quasipositive.

\begin{proof}[Proof of Proposition~\ref{prop: tan}]
A  {\em regular} continued fraction of $p/q$ is defined by $$ \frac {p}{q} = [c_1,c_2,\ldots,c_{2m+1}]= c_1+ \cfrac{1}{c_2+ \cfrac{1}{\ddots+ \cfrac{1}{c_{2m+1}}}} \;, \qquad c_i > 0
\text{ for all  $1\leq i\leq 2m+1$}. $$

Note that there is always a regular continued fraction of $p/q$ of odd length, as above. To see this, suppose that $p/q$  has a regular continued fraction of even length, i.e.  $ \frac {p}{q} = [c_1,c_2,\ldots,c_{2m}]$ with $c_i >0$. If $c_{2m}=1$, then $$ \frac {p}{q} = [c_1,c_2,\ldots,c_{2m}] =[c_1,c_2,\ldots, c_{2m-2}, 1+c_{2m-1}]$$ and  otherwise, $$ \frac {p}{q} =[c_1,c_2,\ldots,c_{2m}] =[c_1,c_2,\ldots, c_{2m-1}, -1+ c_{2m}, 1].$$

Using an odd length regular continued fraction $\frac {p}{q} = [c_1,c_2,\ldots,c_{2m+1}]$, we define the two bridge link $K(p,q)$ as depicted in Figure~\ref{fig: 2bridge}, where the integer inside each box denotes the signed number of  half twists to be inserted.
 \begin{figure}[h]
\centering
 {\epsfxsize=6in
\centerline{\epsfbox{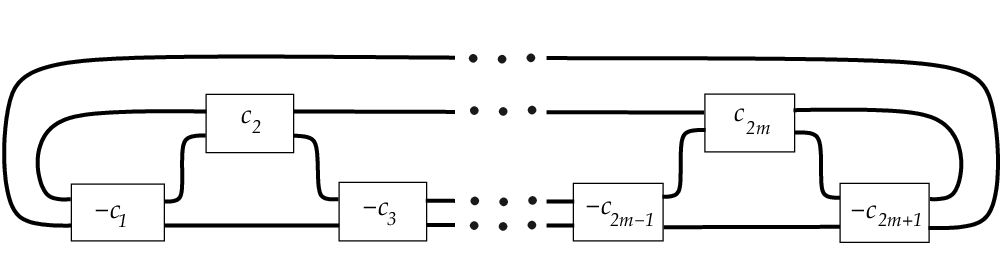}}}
 \caption{Two-bridge link $K(p,q)$, where $ p/q = [c_1,c_2,\ldots,c_{2m+1}].$}
                        \label{fig: 2bridge}
 \end{figure}

In \cite{t},  Tanaka uses the notation $\mathcal{C} (c_1, c_2, \ldots, c_{2m+1})$, with each  $c_i>0$, to describe a two-bridge knot. By comparing \cite[Figure 6]{t}, for example,  with our Figure~\ref{fig: 2bridge}, we see that our definition of $K(p,q)$ is the {\em mirror image} of the one described by Tanaka, but agrees with the one  described by Lisca \cite{l}.

According to  \cite[Proposition 3.1]{t},  the two-bridge knot $\mathcal{C} (c_1, c_2, \ldots, c_{2m+1}) $, which is the mirror image  of our $K(p,q)$ defined above,  is strongly quasipositive as long as $c_i$ is even for each even index $2 \leq i \leq 2m$.  To express this condition in terms of negative continued fractions,   we just describe how to convert a given regular continued fraction of any $p/q$ to the negative continued fraction of $p/(p-q)$ and vice versa in Lemma~\ref{lem: convert}. This finishes the proof of Theorem~\ref{thm: main}, since by Lemma~\ref{lem: convert}, we can easily deduce that the negative continued fraction  of $p/(p-q)$ is even if and only if $p/q$ has a regular continued fraction of odd length where each even indexed coefficient is even.
\end{proof}

\begin{lemma} \label{lem: convert} Suppose that $p/q = [c_1,c_2,\ldots,c_{2m+1}]$ with $c_i >0$ for all   $1\leq i\leq 2m+1$.  Then $$  \frac{p}{q} =
[1+c_1, \underbrace{2, \ldots, 2}_{c_2-1}, 2+c_3, \underbrace{2, \ldots, 2}_{c_4-1},  \ldots, 2+c_{2m-1}, \underbrace{2, \ldots, 2}_{c_{2m}-1},  1+c_{2m+1}]^{-}$$  and
$$ \frac{p}{p-q} =
[\underbrace{2, \ldots, 2}_{c_1-1}, 2+c_2, \underbrace{2, \ldots, 2}_{c_3-1}, 2+c_4, \ldots, \underbrace{2, \ldots, 2}_{c_{2m-1}-1}, 2+c_{2m}, \underbrace{2, \ldots, 2}_{c_{2m+1}-1}]^{-}$$

\end{lemma}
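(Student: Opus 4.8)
The plan is to encode both kinds of continued fraction as products of $2\times 2$ integer matrices and to reduce each of the two claimed identities to a single closed-form matrix identity, which then propagates by an induction that peels off two coefficients at a time. For an integer $a$ set
\[ L(a)=\begin{pmatrix} a & -1\\ 1 & 0\end{pmatrix},\qquad R(c)=\begin{pmatrix} c & 1\\ 1 & 0\end{pmatrix},\qquad T=\begin{pmatrix}1&1\\0&1\end{pmatrix},\qquad D=\begin{pmatrix}1&0\\0&-1\end{pmatrix}. \]
A direct check gives that the ratio of the first-column entries of $L(a_1)\cdots L(a_k)$ is $[a_1,\ldots,a_k]^{-}$, and of $R(c_1)\cdots R(c_n)$ is $[c_1,\ldots,c_n]$. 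Since $\det L(a)=1$, the first column of any product of $L$'s is a pair of coprime integers, and an easy induction shows it is positive with top entry exceeding bottom entry once every $a_i\ge 2$; hence that first column is literally the reduced numerator and denominator. As every coefficient appearing in the two displayed expansions is $\ge 2$, uniqueness of the negative continued fraction reduces the lemma to showing that the relevant products of $L$'s have first columns $(p,q)^{\top}$ and $(p,p-q)^{\top}$, respectively.

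First I would record the one computation that drives everything, namely
\[ L(2)^{\,n}=\begin{pmatrix} n+1 & -n\\ n & 1-n\end{pmatrix}, \]
which is immediate from the fact that $L(2)-I$ is nilpotent of square zero. For the first identity I would then verify by hand the closed form
\[ R(c_1)\,R(c_2)=L(1+c_1)\,L(2)^{\,c_2-1}\,T, \]
together with the elementary relations $L(a+1)=T\,L(a)$ and the endpoint relation $R(c)\,D=L(c)$. The relation $L(a+1)=TL(a)$ says that left-multiplying a tail expansion by $T$ merely raises its leading coefficient by $1$ (turning a leading $1+c_3$ into $2+c_3$). Peeling $c_1,c_2$ from the left and using the inductive hypothesis on $[c_3,\ldots,c_{2m+1}]$ then yields
\[ R(c_1)\cdots R(c_{2m+1})\,D=L(1+c_1)\,L(2)^{c_2-1}\,L(2+c_3)\cdots L(2)^{c_{2m}-1}\,L(1+c_{2m+1}), \]
whose first column is $(p,q)^{\top}$; this is the first formula, the single-term base case being exactly $R(c)\,D=L(c)$.

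The second formula follows from the same scheme with the correction matrix $N=\begin{pmatrix}1&0\\1&-1\end{pmatrix}$ in place of $D$. Here I would check the engine identity
\[ L(2)^{\,c_1-1}\,L(2+c_2)\,N=N\,R(c_1)\,R(c_2) \]
and the base case $N\,R(c)\,T^{-1}=L(2)^{\,c-1}$, and again peel $c_1,c_2$ from the left to obtain
\[ L(2)^{c_1-1}\,L(2+c_2)\cdots L(2+c_{2m})\,L(2)^{c_{2m+1}-1}=N\,\big(R(c_1)\cdots R(c_{2m+1})\big)\,T^{-1}. \]
Its first column is $N\,(p,q)^{\top}=(p,p-q)^{\top}$, which is the second formula.

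The only genuine subtlety, and the step I expect to demand the most care, is the bookkeeping of the additive constants at the two ends and at the junctions between consecutive blocks: one must explain why the interior odd-indexed coefficients acquire $+2$ while the two extreme ones acquire only $+1$ in the first formula, whereas no such asymmetry is present in the second. This is precisely what the shear factor $T$ (respectively the pair $N,\,T^{-1}$) is designed to absorb, and the main work is to confirm that these correction matrices telescope correctly through the induction while keeping every intermediate coefficient $\ge 2$, so that the uniqueness of the negative continued fraction can be invoked at the end.
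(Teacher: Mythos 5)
Your proposal is correct, and it takes a genuinely different (and far more explicit) route than the paper. The paper disposes of the first identity by asserting that it follows from ``a straightforward induction argument'' and of the second by invoking Riemenschneider's point diagram, i.e.\ the classical duality between the negative continued fraction expansions of $p/q$ and $p/(p-q)$; no computations are given. You instead set up a transfer-matrix calculus and prove both identities by the same mechanism, reducing each to a two-step engine identity plus a base case. I checked the key computations: $L(2)^n=\bigl(\begin{smallmatrix} n+1 & -n\\ n & 1-n\end{smallmatrix}\bigr)$, the identity $R(c_1)R(c_2)=L(1+c_1)L(2)^{c_2-1}T$, the relations $L(a+1)=TL(a)$ and $R(c)D=L(c)$, and on the other side $L(2)^{c_1-1}L(2+c_2)N=NR(c_1)R(c_2)$ with $NR(c)T^{-1}=L(2)^{c-1}$; all hold, the corrections $T$, $D$, $N$, $T^{-1}$ telescope as claimed, and the degenerate blocks (when some $c_i=1$, so $L(2)^{c_i-1}=I$) are handled automatically. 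Since $N(p,q)^{\top}=(p,p-q)^{\top}$ and right multiplication by $D$ or $T^{-1}$ leaves the first column unchanged, the first columns of the two $L$-products are exactly $(p,q)^{\top}$ and $(p,p-q)^{\top}$, which is the lemma. Two small remarks: you do not actually need uniqueness of the negative continued fraction at the end, since the first-column-ratio interpretation of the $L$-product already gives the identity directly; and your approach has the advantage over the paper's of proving the $p/(p-q)$ formula independently, so that the Riemenschneider duality between the two expansions drops out as a corollary rather than being an input.
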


\begin{proof} The negative continued fraction of $p/q$ can be obtained from a given regular continued fraction of $p/q$ by a straightforward induction argument, whereas the negative continued fraction of $p/(p-q)$ is obtained from that of $p/q$ by the Riemenschneider’s point diagram method \cite{rie}.
\end{proof}

Using Lemma~\ref{lem: convert}, we can rephrase Corollary~\ref{cor: even} as follows: 

\begin{corollary} \label{cor: ev} The two-bridge knot $K(p,q)$ is quasipositive if and only if $p/(p-q)$ has a regular continued fraction of odd length where each even indexed coefficient is even.
\end{corollary}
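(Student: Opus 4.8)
The plan is to obtain Corollary~\ref{cor: ev} as a purely combinatorial reformulation of Corollary~\ref{cor: even}: the only task is to translate the condition ``the negative continued fraction of $p/q$ is even'' into the stated condition on regular continued fractions of $p/(p-q)$, and this translation is exactly what Lemma~\ref{lem: convert} provides once one observes that it may be applied with the roles of $q$ and $p-q$ interchanged. Here one uses that $p$ and $p-q$ are again relatively prime with $0<p-q<p$, so that $K(p,p-q)$ and the continued fraction identities of Lemma~\ref{lem: convert} make sense for $p/(p-q)$.

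First I would record the standard fact that every rational number exceeding $1$ has precisely two regular continued fraction expansions with positive integer partial quotients, and that these have lengths differing by one; in particular $p/(p-q)$ has a \emph{unique} regular continued fraction of odd length, say $[c_1,c_2,\ldots,c_{2m+1}]$. Feeding this expansion into the \emph{second} identity of Lemma~\ref{lem: convert}, applied to the number $p/(p-q)$ (so that the ``$p-q$'' appearing there becomes our $q$), yields
$$\frac{p}{q} = [\underbrace{2,\ldots,2}_{c_1-1},\; 2+c_2,\; \underbrace{2,\ldots,2}_{c_3-1},\; 2+c_4,\; \ldots,\; \underbrace{2,\ldots,2}_{c_{2m-1}-1},\; 2+c_{2m},\; \underbrace{2,\ldots,2}_{c_{2m+1}-1}]^{-}.$$
By uniqueness of negative continued fractions this is the negative continued fraction of $p/q$, and since its entries are $2$'s together with the numbers $2+c_2,2+c_4,\ldots,2+c_{2m}$, it is even if and only if $c_2,c_4,\ldots,c_{2m}$ are all even.

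Now I would chain the equivalences. By Corollary~\ref{cor: even}, $K(p,q)$ is quasipositive if and only if the negative continued fraction of $p/q$ is even, which by the previous step holds if and only if the (unique) odd-length regular continued fraction of $p/(p-q)$ has all of its even-indexed coefficients even. Because that odd-length expansion is unique, this last condition is the same as requiring that $p/(p-q)$ admit \emph{some} regular continued fraction of odd length in which every even-indexed coefficient is even, which is the assertion of the corollary. The degenerate case $p-q=1$ (where $m=0$, $p/(p-q)=[p]$ has odd length with no even-indexed coefficient, and $p/q = p/(p-1) = [\,\underbrace{2,\ldots,2}_{p-1}\,]^{-}$ is automatically even) is consistent with this and is checked directly.

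I do not expect a genuine obstacle here: all the content of the corollary is already carried by Corollary~\ref{cor: even} and Lemma~\ref{lem: convert}, and what remains is bookkeeping. The two points that demand a little care are keeping the substitution $q\leftrightarrow p-q$ straight when invoking Lemma~\ref{lem: convert}, and using uniqueness of the odd-length regular continued fraction in order to pass from ``the'' odd-length expansion to ``some'' odd-length expansion in the final equivalence.
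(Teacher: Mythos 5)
Your proposal is correct and follows essentially the same route as the paper: the paper states Corollary~\ref{cor: ev} as a direct rephrasing of Corollary~\ref{cor: even} via Lemma~\ref{lem: convert} (with the roles of $q$ and $p-q$ interchanged), which is precisely the translation you carry out. Your added remarks on the uniqueness of the odd-length regular expansion and the case $p-q=1$ are correct bookkeeping that the paper leaves implicit.
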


Finally, we turn our attention to  Proposition~\ref{prop: stein}.

\begin{proof}[Proof of Proposition~\ref{prop: stein}]   Rudolph \cite{r1} showed that quasipositive links  arise as the transverse intersection of $S^3 \subset  \C^2$, with a complex curve. Therefore, the quasipositivity assumption implies that $K$ bounds  a complex curve  in $B^4$. Since  $K$ is assumed to be smoothly slice as well,  there is a smooth disk in $B^4$ with boundary $K$. But by the "local Thom conjecture" \cite[Corollary 1.3]{km} , the complex curve minimizes genus, so the slice disk can be assumed to be complex. Hence, the analytic double cover of  $B^4$ equipped with its standard complex structure, branched along this complex slice disk, is a rational homology ball Stein filling of $(Y, \xi)$. It is well-known that the double cover is a rational homology ball and it is in fact Stein by \cite[Theorem 3]{lp}.\end{proof}

\section{Smoothly slice two-bridge knots are non-quasipositive}

As we pointed out in the Introduction, the fact that smoothly slice two-bridge knots are non-quasipositive follows by combining \cite[Proposition 2]{r3} with \cite[Proposition 3.6 and Corollary 3.7]{br}.   Here we provide an alternate proof based on contact and symplectic topology.

\begin{corollary}   \label{cor: slice} Smoothly slice two-bridge knots are non-quasipositive.
\end{corollary}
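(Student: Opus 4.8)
The plan is to argue by contradiction: I would assume there is a nontrivial smoothly slice two-bridge knot $K(p,q)$ that is quasipositive, and then play the evenness of its continued fraction (forced by quasipositivity) against the strong restrictions that bounding a rational homology ball imposes on a lens space. First I would invoke the two results already assembled. Quasipositivity of $K(p,q)$ gives, by Corollary~\ref{cor: even}, that the negative continued fraction of $p/q$ is even; since $p$ is odd for a two-bridge knot, this is the parity datum I will contradict. Smooth sliceness together with quasipositivity then lets me apply Proposition~\ref{prop: stein}: after isotoping $K(p,q)$ to be transverse to $\xi_{st}$ and forming the contact double cover $(L(p,q),\xi)$ (tight, with $c_1(\xi)=0$, exactly as in the proof of Theorem~\ref{thm: main}), that cover admits a rational homology ball Stein filling. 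In particular $L(p,q)$ bounds a rational homology $4$-ball.

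Next I would extract numerical consequences. Because $L(p,q)$ bounds a rational homology ball, its linking form is metabolic, which already forces $p=\lvert H_1(L(p,q))\rvert$ to be a perfect square; with $p$ odd this gives $p=n^2$ with $n$ odd. To pin down $(p,q)$ completely I would appeal to Lisca's classification~\cite{l} of the lens spaces bounding rational homology balls, which lists the admissible pairs explicitly through their continued fraction expansions. The concluding step is then purely combinatorial: I compare the even negative continued fraction of $p/q$ from Corollary~\ref{cor: even} with the expansions occurring in Lisca's families, tracking parities through the standard recursion $p_j=a_j p_{j+1}-q_{j+1}$ for negative continued fractions (for instance, an all-even expansion of length $k$ forces $p\equiv k-1\pmod 2$, so a two-bridge knot requires even length). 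The goal is to show that no nontrivial pair $(p,q)$ is simultaneously of Lisca type and even, which contradicts the existence of $K(p,q)$ and proves the corollary.

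The main obstacle I anticipate is exactly this last matching step: verifying that Lisca's recursively generated families never yield an all-even negative continued fraction except in the degenerate case $p=1$. Since those families are built by iterated insertions most naturally described via Riemenschneider's point-diagram duality, I expect the cleanest route is to record how each generating move changes the parities of the continued-fraction entries, or dually the entries of $p/(p-q)$ via Lemma~\ref{lem: convert}, rather than to inspect the families one by one. An attractive alternative, which would bypass the number theory altogether and keep the argument resting only on Corollary~\ref{cor: even} and Proposition~\ref{prop: stein} as advertised, is to establish a direct symplectic obstruction: that a tight contact structure with $c_1=0$ on a lens space admits a rational homology ball Stein filling only when the lens space is $S^3$.
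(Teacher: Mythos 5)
Your strategy (quasipositivity forces an all-even negative continued fraction via Corollary~\ref{cor: even}; sliceness forces $p/q$ into Lisca's set $\mathcal{R}$; derive a contradiction) points in a workable direction, but the proof has a genuine gap exactly where you flag ``the main obstacle'': the claim that no nontrivial $p/q\in\mathcal{R}$ with $p$ odd has an all-even negative continued fraction is never established, and once Corollary~\ref{cor: even} and Lisca's sliceness classification are granted, that claim \emph{is} the corollary. Lisca's set $\mathcal{R}$ consists of several recursively generated families of continued fractions, and checking the parity assertion across all of them is a substantial combinatorial verification that you neither carry out nor reduce to anything simpler. Relatedly, your invocation of Proposition~\ref{prop: stein} extracts only the smooth conclusion that $L(p,q)$ bounds a rational homology ball --- but that already follows from sliceness alone (take the double cover of $B^4$ branched over the slice disk), so as written the symplectic input does no work and your argument collapses entirely onto the unproven combinatorial claim.

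The paper's proof avoids this by using the full strength of Proposition~\ref{prop: stein}: the rational homology ball filling is \emph{Stein}, so by Golla--Starkston \cite{gs} the contact structure cannot be virtually overtwisted and must be $\xi_{can}$; then Lisca's \cite[Corollary 1.2(c)]{l} says $(L(p,q),\xi_{can})$ admits such a filling only for $p/q$ in the small explicit subfamily $\mathcal{O}$ (where $p=m^2$ and $q=mh-1$). This reduces the combinatorics to $\mathcal{O}$ alone, where a short parity argument finishes: if $h$ is even then $q$ is odd, so $pq$ is odd and quasipositivity is excluded; if $h$ is odd one passes to $q'=m(m-h)-1$, which is odd, and uses $K(p,q)\simeq K(p,q')$. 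Your proposed ``attractive alternative'' (a tight contact structure with $c_1=0$ on a lens space has a rational homology ball Stein filling only for $S^3$) is essentially what the paper proves en route, but it requires both the Golla--Starkston step and the parity computation on $\mathcal{O}$; it is not a shortcut that bypasses them. To complete your proof you must either carry out the parity analysis over all of Lisca's families or restructure the argument along the paper's lines.
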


\begin{proof}   Lisca  \cite{l}  showed that $L(p,q)$ bounds a rational homology ball  if and only if $p/q$ belongs to a certain subset $\mathcal{R}$ of the set of positive rational numbers. Lisca also showed that, for odd $p$,  any two-bridge knot $K(p,q)$ is smoothly slice if and only if $p/q \in  \mathcal{R}$. By definition, the set $\mathcal{R}$ contains a subset, denoted by $\mathcal{O}$ here, which consists of $p/q >0$ such that $p=m^2$ (for odd $m$), and $q=mh-1$ where $0< h < m$ and $(m,h)=1$.

Suppose that $p/q \in \mathcal{R} \setminus \mathcal{O}$ and  $K(p,q)$  is a smoothly slice, quasipositive two-bridge knot.   We can isotope $K(p,q)$ to be transverse to the standard contact structure $\xi_{st}$  in $S^3$. Let $(L(p,q), \xi)$ be the double cover of $(S^3, \xi_{st})$ branched along the transverse knot $K$. According to Proposition~\ref{prop: stein},  $(L(p,q), \xi)$ admits a rational homology ball Stein filling, which in turn, implies that $\xi$ is isomorphic to the canonical contact structure  $\xi_{can}$, because no virtually overtwisted lens space has a rational homology ball symplectic filling by the work of Golla and Starkston \cite{gs}. (See also \cite[Lemma 1.5]{er}, \cite[Proposition 11]{et}). This gives a contradiction to the fact that $(L(p,q), \xi_{can})$ admits a rational homology ball symplectic filling  if and only if $p/q \in \mathcal{O}$, as shown by Lisca \cite[Corollary 1.2(c)]{l}.

Now suppose that $p/q \in \mathcal{O}$,  i.e.
$p=m^2$, for odd $m>1$ and $q=mh-1$ where $0< h < m$ and $(m,h)=1$. If $h$ is even, then $q=mh-1$ is odd and $K(p,q)$ is  non-quasipositive by Corollary~\ref{cor: even}. On the other hand, if $h$ is odd, then $q=mh-1$ is even but $q' = m (m-h)-1$ is odd and $qq' \equiv 1(\mbox{mod}\; m^2).$ Since $K(p,q)$ is isotopic to $K(p, q')$ \cite[Chapter 12]{bz}, we conclude that  $K(p, q')$ and hence $K(p, q)$ is non-quasipositive, again by Corollary~\ref{cor: odd}.
\end{proof}

\noindent {\bf {Acknowledgment}}: We would like to thank Sebastian Baader,  Peter Feller, Marco Golla, and Andr\'{a}s Stipsicz for their useful comments on a draft of this note.

\section {Appendix by Stepan Orevkov}

The Seifert graph of a connected link diagram $D$ is the graph $G_D$ whose vertices correspond to Seifert circles and the edges correspond to the crossings. Each edge is equipped with the sign of the corresponding crossing. We say that a diagram $D$ is reduced if $G_D$  does not have any edge whose removal disconnects $G_D$. Let $w(D)$ denote the writhe of $D$, which is  the sum of the signs of all crossings of $D$.

Suppose that $D$ is an alternating diagram of a link $L$. Let $b = b(L)$ denote the braid index of $L$ and $s = s(D)$ denote the number of Seifert circles of $D$. We define $d^{\pm}=d^{\pm}(D)$ as the number of edges of $\pm$ sign in a spanning tree of $G_D$.   It is easy to check that these quantities do not depend on the choice of $G_D$ when $D$ is alternating (this fact can be also derived from Traczyk's theorem [25, Theorem 3]).

Suppose that $\b$ is a braid with $b$ strands realizing $L$. Due to Dynnikov–Prasolov Theorem \cite{dp}, $w(\b)$ does not depend on the choice of $b$-strand $\b$ realizing $L$, which allows one to define the numbers $\rr^{\pm} = \rr^{\pm} (D)$ from the system of equations
$$\rr^{+} + \rr^- =s-b \quad \mbox{ and } \quad \rr^{+} - \rr^{-} = w(D)-w(\b).$$

By the work of Rudolph \cite{r2} and Nakamura \cite[Lemma 4.1]{n}, positive diagrams represent strongly quasipositive links. Conversely,  Baader \cite{b} showed that homogeneous  strongly quasipositive knots are positive.  Note that the class of homogeneous links, introduced by Cromwell \cite{c},  includes all
alternating links. Moreover,  alternating strongly quasipositive
links have positive alternating diagrams by Boileau, Boyer and Gordon \cite[Corollary 7.3]{bbg}.  Therefore, the following question appears naturally (see \cite{b}, for example):

\begin{question}\label{que: baader} Is it true that alternating quasipositive links have positive diagrams?
\end{question}

A partial answer to this question was provided as follows:

\begin{theorem}\label{thmo} {\rm(\cite[Theorem 4]{o}).}
If $D$ is a reduced alternating diagram of a quasipositive link $L$, which satisfies the inequality
\begin{equation} \label{eq: a}
    2\rr^-(D) \le d^-(D)
\end{equation} then $D$ is positive, and hence $L$ is strongly quasipositive. \end{theorem}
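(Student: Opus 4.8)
The plan is to exploit the tension between two facts about a reduced alternating diagram $D$ of a quasipositive link $L$: on the one hand, since $L$ is quasipositive, its slice-Bennequin-type invariants force a lower bound on the writhe of any braid $\b$ on $b=b(L)$ strands realizing $L$; on the other hand, the alternating and reduced hypotheses, together with the bound \eqref{eq: a}, force an upper bound that can only be consistent if $d^-(D)=0$, i.e. if $D$ has no negative crossings. First I would recall Rudolph's slice-Bennequin inequality in the form adapted to braids: quasipositivity of $L$ implies that for the $b$-strand braid $\b$ one has $w(\b) \ge b - (\text{something})$, or more precisely that the self-linking number realized by the quasipositive braid presentation is maximal and equals $2g_4(L) - 1$ summed appropriately; the key consequence is a sharp inequality relating $w(\b)$, $b$, and the (topological, hence here genus-realizing) Seifert genus coming from the alternating diagram. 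I would then feed this into the defining system $\rr^+ + \rr^- = s - b$, $\rr^+ - \rr^- = w(D) - w(\b)$ to convert the Bennequin bound into an upper bound on $\rr^-(D)$.

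Next I would bring in the combinatorics of the Seifert graph $G_D$. For a reduced alternating diagram, $G_D$ is connected, every spanning tree has $s-1$ edges, and $d^+(D) + d^-(D) = s-1$; moreover, because $D$ is alternating, the sign of a crossing is determined by the structure of $G_D$ in a controlled way, and the genus-realizing property of alternating diagrams (Crowell--Murasugi) pins down the Seifert genus of $L$ in terms of $s$ and the crossing number. Combining this with the braid-index information — here I expect to invoke the HOMFLY/Morton--Franks--Williams bound for alternating links, which by the work on the Jones conjecture (Dynnikov--Prasolov, already cited as \cite{dp}) is sharp and lets one compute $s-b$ in terms of $d^+$ and $d^-$ — I would derive an equality or near-equality of the form $\rr^-(D) \ge c\, d^-(D)$ for some explicit positive constant $c$, in fact $\rr^-(D) = d^-(D)$ or $2\rr^-(D) \ge d^-(D)$ with equality only when $d^-=0$. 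Pairing this lower bound on $\rr^-(D)$ with the hypothesis \eqref{eq: a}, namely $2\rr^-(D) \le d^-(D)$, squeezes everything to $\rr^-(D) = 0$ and $d^-(D)=0$, so $D$ has only positive crossings; then $L$ is strongly quasipositive by Rudolph \cite{r2} and Nakamura \cite[Lemma 4.1]{n}, as recalled in the excerpt.

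The main obstacle I anticipate is establishing the lower bound $2\rr^-(D) \ge d^-(D)$ — equivalently, showing that the \emph{a priori} inequality in \eqref{eq: a} is in fact always an equality-or-reverse for quasipositive alternating links — since this is where the quasipositivity hypothesis must be used in an essential (not merely formal) way, presumably through a sharp slice-genus/Bennequin computation that marries Rudolph's inequality to the alternating-diagram genus formula. A secondary technical point is handling the reducedness hypothesis carefully: one needs that removing a bridge of $G_D$ does not change $L$ but does change the counts $d^\pm$, $s$, $w(D)$ in a compensating way, so that it suffices to prove the statement for reduced diagrams, and one must check that the braid-index quantity $s-b$ behaves correctly under this reduction. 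Once the sharp inequality $2\rr^-(D)\ge d^-(D)$ is in hand, the conclusion is immediate from \eqref{eq: a}.
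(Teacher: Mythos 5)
The first thing to say is that the paper does not prove Theorem~\ref{thmo} at all: it is imported verbatim from \cite[Theorem 4]{o}, and the Appendix only proves Propositions~\ref{propo} and~\ref{prop: ineq} (that the hypothesis \eqref{eq: a} holds for the relevant diagrams). So your attempt can only be measured against Orevkov's argument, not against anything in this text.

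Your strategy --- establish a reverse inequality $2\rr^-(D)\ge d^-(D)$ with equality only when $d^-(D)=0$, and squeeze against \eqref{eq: a} --- identifies the right target; the true statement is in fact the identity $\rr^-(D)=d^-(D)$ for a reduced alternating diagram of a quasipositive link. But this is the entire content of the theorem, you explicitly leave it as ``the main obstacle,'' and the ingredients you list do not suffice to close it. Concretely: Rudolph's realization of $\chi_4(L)$ by the quasipositive Bennequin surface, the slice--Bennequin inequality, and Dynnikov--Prasolov \cite{dp} together give $w(\beta)-b=-\chi_4(L)$ for a minimal braid $\beta$, and substituting into the defining system yields
$$2\rr^-(D)=\bigl(s-w(D)\bigr)+\bigl(w(\beta)-b\bigr)=\chi_3(L)+2c^-(D)-\chi_4(L),$$
where $c^-(D)$ is the number of negative crossings and $\chi_3(L)=s-c$ comes from the Crowell--Murasugi genus formula. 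This is exactly as far as ``Rudolph's inequality married to the alternating-diagram genus formula'' takes you, and it does not yield $\rr^-\ge d^-$: one still has to evaluate $\chi_4(L)$, not $\chi_3(L)$, diagrammatically. Your parenthetical ``the (topological, hence here genus-realizing) Seifert genus'' conflates the two, and $g_4=g_3$ is not available a priori (in this setup $g_3-g_4=c^--d^-$, which is precisely what is at stake). The missing input is a computation such as $g_4=\tau=-\sigma/2$ (sharpness of the $\tau$ bound for quasipositive knots together with Ozsv\'ath--Szab\'o for alternating knots, or the equivalent HOMFLY-degree argument for links) combined with the signature formula $\sigma(L)=-w(D)+d^+(D)-d^-(D)$ for reduced alternating diagrams; only after substituting this does one get $2\rr^-(D)=2d^-(D)$ and hence, from \eqref{eq: a}, $d^-(D)=0$. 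Two smaller inaccuracies: MFW-sharpness does not ``let one compute $s-b$ in terms of $d^+$ and $d^-$'' (already for the positive trefoil $\rr^+=0$ while $d^+=1$, so $\rr^+\ne d^+$ and $s-b\ne d^++d^-$); and the final implication $d^-(D)=0\Rightarrow D$ positive requires the homogeneity of alternating diagrams (each block of $G_D$ is sign-constant), since a spanning tree need not meet every negative edge, only every block.
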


\begin{proposition}\label{propo} {\rm(\cite[Remark 2]{o}).}
The inequality~(\ref{eq: a}) is satisfied by a standard alternating diagram of any two-bridge link or any alternating Montesinos link.
\end{proposition}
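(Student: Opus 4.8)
The plan is to compute each of $s$, $w(D)$, $d^{-}(D)$, $b$, and $w(\b)$ directly from the continued--fraction (equivalently, rational--tangle) data of the standard alternating diagram $D$, assemble them into $\rr^{-}(D)$, and then verify $2\rr^{-}(D)\le d^{-}(D)$ by comparing the negative blocks of the Seifert graph. First I would run Seifert's algorithm on $D$ and record the structure of $G_D$ twist region by twist region: each twist region of length $c$ becomes a block of $G_D$ that is either a \emph{bond} (a $c$--fold multi--edge joining two Seifert circles) or a subdivided \emph{series} (a path of $c$ edges carrying $c-1$ interior vertices of degree two), the type being dictated by the local tangle data and the Seifert orientation. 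Since all $c$ crossings of a single twist region share one sign, every block of $G_D$ is sign--homogeneous, which is the standard reason $d^{\pm}(D)$ is well defined for a reduced alternating diagram; I can then read off $d^{-}(D)$ from $G_D$, each negative bond contributing a single spanning--tree edge and each negative series of length $c$ contributing $c-1$ of them.

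Next I would compute $s$ and $w(D)$ as explicit sums over the continued--fraction coefficients, and then attack the two hard invariants $b$ and $w(\b)$. Here I would invoke that the Morton--Franks--Williams inequality is sharp for alternating links, so that the braid index $b$ is computed by the Murasugi--Przytycki index of $G_D$ (and, for two--bridge and alternating Montesinos links, by Murasugi's explicit braid--index formulas); the Dynnikov--Prasolov theorem cited above then fixes $w(\b)$. Solving the defining system gives $\rr^{-}(D)=\tfrac12\bigl[(s-b)-w(D)+w(\b)\bigr]$, which I expect to be precisely the number of Seifert--circle reductions performed at negative crossings when $D$ is carried to a minimal braid. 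With $b$ and $w(\b)$ in hand, the inequality $2\rr^{-}(D)\le d^{-}(D)$ should collapse to an arithmetic identity in the coefficients, provable by induction on the number of tangles.

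The main obstacle is the evaluation of $\rr^{-}(D)$, i.e. the global braid index, and it is genuinely global rather than a sum of local contributions: for a twist knot such as the Stevedore $K(9,2)$, where $s=5$ and $b=4$, a series region of length four has three interior degree--two vertices but contributes only a single reduction to the braid index, so the reduction count of a region is strictly smaller than its number of interior vertices and cannot be determined from that region alone. This is exactly what the factor $2$ is designed to absorb, and the crux of the proof is to show that a negative series region of length $c$ contributes at most $\lfloor (c-1)/2\rfloor$ to $\rr^{-}(D)$ while contributing $c-1$ to $d^{-}(D)$; establishing this bound on the negative part of the Murasugi--Przytycki index is where the special linear chain structure of two--bridge Seifert graphs must be used. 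The alternating Montesinos case then follows the same computation, with the constituent rational tangles arranged cyclically rather than in a single line.
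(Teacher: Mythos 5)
Your overall strategy coincides with the paper's: compute $s$, $w(D)$, $d^{\pm}(D)$, $b$ and $w(\b)$ from the continued-fraction data of the standard diagram, solve for $\rr^{\pm}(D)$, and compare region by region. Moreover the crux you isolate---that a negative constant-sign region contributing $c-1$ edges to a spanning tree of $G_D$ accounts for at most $\lfloor(c-1)/2\rfloor$ of the reductions counted by $\rr^{-}(D)$---is exactly the mechanism on which the paper's argument turns. The problem is that you state this bound as ``the crux to be shown'' and never establish it; that bound \emph{is} the proposition, so as written the proposal begs the question. The paper closes it in two different ways. For two-bridge links (Proposition~\ref{prop: ineq}) it needs no per-region bookkeeping at all: writing $p/(p-q)$ as an alternating-sign even negative continued fraction with $t$ sign blocks, Murasugi's explicit formulas \cite[Prop.~4.2, Thm.~4.3]{mu1} for the braid index $b$ and the exponent sum $e$, combined with the directly computed $s$, $w$, $d^{\pm}$ of the tangle diagram, yield the exact identities $2\rr^{+}=d^{+}-\lceil t/2\rceil$ and $2\rr^{-}=d^{-}-\lfloor t/2\rfloor$, from which both inequalities are immediate. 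For alternating Montesinos links the paper does not redo the computation ``cyclically'' as you propose, but instead observes (Section~\ref{sec: dehl}) that in \cite{deh} the Murasugi--Przytycki moves realizing the braid index are performed at crossings chosen inside pairwise disjoint constant-sign paths of $G_D$, at most $\lfloor(n-1)/2\rfloor$ per path on $n$ vertices, and that any such family of disjoint paths extends to a spanning tree; this is precisely your bound, but it has to be extracted from the explicit move selection in \cite{deh}, not from the linear-chain structure alone.

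Two further soft spots in your setup. First, the Dynnikov--Prasolov theorem only guarantees that $w(\b)$ is independent of the chosen minimal braid $\b$ (so that $\rr^{\pm}$ are well defined); it does not ``fix'' the value of $w(\b)$, and you still need Murasugi's exponent-sum formula (or an equivalent) to evaluate it. Second, the identification of $\rr^{\pm}(D)$ with the number of Seifert-circle reductions performed at positive/negative crossings presupposes that the reduced diagram realizes the braid index, i.e.\ sharpness of the Morton--Franks--Williams bound for the link in question; for two-bridge and alternating Montesinos links this is a substantive theorem of \cite{mu1} and \cite{deh} and should be cited as such rather than invoked as a general property of alternating links.
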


\begin{remark} \label{rem: dhl}As pointed out in \cite[Remark 2]{o},
inequality~(\ref{eq: a}) is actually proven in \cite{deh} for the diagrams
from \cite[Thm. 4.10, 4.12, 4.14]{deh} (in particular, for those in
Proposition~\ref{propo})
even though it is not formulated in
\cite{deh} explicitly. Since it is not so easy to recognize the proof of this fact without carefully
reading the whole paper \cite{deh}, one of our goals here is to
help the reader to extract this proof from \cite{deh}.
\end{remark}

\begin{remark} \label{rem: orev} Note that by Proposition~\ref{propo}, if a two-bridge link or an alternating Montesinos link is quasipositive, then
it is positive. \end{remark}

The statement in Proposition~\ref{prop: ineq} was claimed in \cite[Remark~2]{o} and as mentioned there,  it follows from \cite{deh}. For rational links, however, it can also be easily derived from Murasugi's work \cite[Section 3]{mu1}.

\begin{proposition}  \label{prop: ineq} Every oriented two-bridge (aka rational)  link
admits an alternating diagram  $D$ satisfying the inequalities $2\rr^-(D) \le d^-(D)$ and  $2\rr^+(D) \le d^+(D)$.
 \end{proposition}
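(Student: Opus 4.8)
The plan is to give an explicit construction of the desired alternating diagram $D$ using the standard alternating picture of a two-bridge link from a regular continued fraction, and then to track the quantities $s$, $b$, $w(D)$, $d^{\pm}(D)$ and $\rr^{\pm}(D)$ directly in terms of the continued fraction data. Since a two-bridge link has braid index small (genuinely at most $3$ for a knot, and the relevant bound comes from the plumbing description), I would first recall that $K(p,q)$ is the boundary of a linear plumbing of Hopf bands, i.e. of annuli with cores forming a chain, the signs of the clasps being read off from the signs of the continued fraction coefficients $c_1,\dots,c_{2m+1}$ (equivalently from the parity pattern in Lemma~\ref{lem: convert}). The Seifert algorithm applied to the obvious alternating diagram produces a Seifert surface isotopic to this plumbed surface, so $G_D$ is a path graph (a chain), and $s-b$ together with $w(D)-w(\b)$ are computable: each "box" of $c_i$ half-twists contributes $|c_i|$ parallel edges of a fixed sign between two consecutive Seifert circles, while the genus/Euler-characteristic bookkeeping pins down $s$ and $b$ via the HOMFLY/Bennequin-type sharpness that holds for alternating (indeed homogeneous) links.

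The key steps, in order, are: (1) fix the alternating diagram $D$ coming from a regular continued fraction expansion $p/q=[c_1,\dots,c_{2m+1}]$ and identify $G_D$ as a chain of multi-edges, grouping the edges into blocks $B_1,\dots,B_{2m+1}$ with $|B_i|=|c_i|$ and $\operatorname{sign}(B_i)=\operatorname{sign}(c_i)$; (2) choose a spanning tree of $G_D$ by taking exactly one edge from each block $B_i$ — this is legitimate since the blocks are arranged in a path — so that $d^{\pm}(D)$ is just the number of blocks whose sign is $\pm$, and all the remaining $|c_i|-1$ edges in block $B_i$ are non-tree edges of that sign; (3) compute $s-b$ and $w(D)-w(\b)$: using that alternating links realize the Morton–Franks–Williams braid-index bound and the sharp Bennequin inequality (this is exactly where a reduced alternating, hence homogeneous, diagram is needed), one gets $s-b$ and $w(D)-w(\b)$ as explicit linear expressions in the $c_i$, and hence $\rr^{\pm}(D)=\tfrac12\big((s-b)\pm(w(D)-w(\b))\big)$ as explicit sums over the negative (resp. positive) blocks of $(|c_i|-1)$-type contributions; (4) conclude $2\rr^-(D)\le d^-(D)$ and $2\rr^+(D)\le d^+(D)$ by comparing, block by block, the contribution of each block to $\rr^{\pm}$ against its contribution ($1$ if it has that sign, $0$ otherwise) to $d^{\pm}$. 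The last comparison should reduce to an elementary per-block inequality, and the alternating hypothesis ensures there is no cross-cancellation between blocks of opposite sign. As noted in the statement, for rational links one can instead quote Murasugi \cite[Section 3]{mu1}, which computes precisely the braid index and the writhe-type invariant of the canonical alternating diagram of a two-bridge link; substituting Murasugi's formulas into the defining system for $\rr^{\pm}$ short-circuits step (3) entirely, and I would present the argument that way to keep it self-contained modulo \cite{mu1}.

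The main obstacle I anticipate is step (3): making the relation $w(D)-w(\b)$ and $s-b$ genuinely explicit requires knowing the exact braid index $b(L)$ and the exact "defect" between the writhe of the alternating diagram and that of a minimal-strand braid, and this is the point where one must invoke nontrivial structure — the Dynnikov–Prasolov theorem \cite{dp} to know $w(\b)$ is well-defined, plus Murasugi's explicit computation (or the sharpness of MFW for alternating links together with a Seifert-circle reduction argument). Once those inputs are in hand, steps (1), (2) and (4) are bookkeeping: the real content is organizing the continued-fraction data so that the per-block inequalities line up, and checking the boundary cases (blocks of size $1$, and the first and last blocks, which behave slightly differently in the plumbing picture). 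I would also double-check the orientation conventions relating the sign of $c_i$ to the sign of the crossings in $B_i$ against Figure~\ref{fig: 2bridge}, since the whole inequality is sign-sensitive and an orientation mismatch would swap the roles of $\rr^+$ and $\rr^-$.
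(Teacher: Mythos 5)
Your overall strategy --- take the standard alternating diagram, invoke Dynnikov--Prasolov for the well-definedness of $w(\b)$ and Murasugi's explicit braid-index and exponent-sum formulas from \cite{mu1} to get $s-b$ and $w(D)-w(\b)$, then compute $d^{\pm}$ directly and compare --- is exactly the paper's approach. But there is a genuine gap in steps (1)--(2): your description of the Seifert graph is wrong, and the inequality fails with the numbers you assign. A twist region of $|c_i|$ crossings contributes $|c_i|$ \emph{parallel} edges between two Seifert circles only when the two strands through it are coherently oriented; when they are antiparallel (which happens for many of the twist regions of a two-bridge diagram, in a pattern governed by the orientation and the parities of the $c_i$), Seifert's algorithm produces $|c_i|-1$ small bivalent circles and a chain of $|c_i|$ edges in series, of which every spanning tree must contain at least $|c_i|-1$. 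Consequently $s$ is not $(\text{number of blocks})+1$ and $d^{\pm}$ is not the number of blocks of sign $\pm$. A concrete check: for $p/(p-q)=[4,4]^-$ (so $p/q=15/11$, and $15/4=[3,1,3]$), Murasugi gives $b=4$ while the reduced alternating diagram has $s=6$, so $d^++d^-=s-1=5\neq 3$ and $\rr^++\rr^-=s-b=2$; here $2\rr^+=4$ exceeds your claimed $d^+=3$, so your per-block comparison in step (4) cannot go through. (The parenthetical claims that two-bridge knots have braid index at most $3$ and that MFW is sharp for all alternating links are also false, though the latter is not needed since \cite{mu1} gives sharpness for rational links.)

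What the paper does to avoid this is precisely the part your sketch leaves unspecified: it first reduces by mirroring (which swaps $d^{\pm}$ and $\rr^{\pm}$) to the case $q$ odd, then uses the negative continued fraction of $p/(p-q)$ with all entries even, grouped into maximal constant-sign blocks indexed by $i=1,\dots,t$, and builds a specific alternating diagram out of tangles $T_i$ for which every crossing of $T_i$ has sign $(-1)^{i-1}$. For that diagram one can count exactly: each tangle contributes $2(n_{i,j}-1)$ series-type tree edges per coefficient plus one more, giving $d^+=\lceil t/2\rceil+\sum_{i\,\mathrm{odd}}\sum_j 2(n_{i,j}-1)$ and similarly for $d^-$, while Murasugi's formulas yield $2\rr^{\pm}=d^{\pm}-\lceil t/2\rceil$ (resp.\ $-\lfloor t/2\rfloor$). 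The inequality is thus an exact identity up to a nonnegative defect, not a block-by-block domination; to repair your argument you would need to redo your steps (1)--(2) with the correct orientation-sensitive Seifert graph, at which point you essentially reproduce the paper's computation.
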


\begin{proof}  For rational links we follow the orientation convention used in Murasugi's book \cite{mu}.
Let $L$ be a rational oriented link of type $(p,q)$. For the mirror image $D^*$ of $D$, we have $d^{\pm}(D^*) = d^{\mp}(D)$ and $\rr^{\pm}(D^*) = \rr^{\mp}(D)$.
Therefore, without loss of generality we may assume that $q$ is odd and $0<q<p$.
Using the notation introduced in \cite[Section 3]{mu1}, let
$$
   \frac{p}{p-q}=[2n_{1,1},\ldots,2n_{1,k_1},\;
   -2n_{2,1},\ldots,-2n_{2,n_2},\; \ldots\;,
         (-1)^{t-1}2n_{t,1},\ldots,(-1)^{t-1}2n_{t,k_t}]^-,
     $$
where we assume $n_{i,j}>0$.

Let $b$ be the braid index of $L$ and $e=w(\beta)$ be the exponent sum of
a $b$-braid $\beta$ representing $L$. By \cite[Prop.~4.2 and Thm.~4.3]{mu1}
we have
\begin{equation}\label{2}
   b = t+1+\sum_{i=1}^t\sum_{j=1}^{k_i}(n_{i,j}-1),
   \qquad
   e = \frac{1-(-1)^t}2 + \sum_{i=1}^t(-1)^{i-1}\sum_{j=1}^{k_i}n_{i,j}. 
\end{equation}
Using the standard properties of rational links and continued fractions, one
easily checks that $L$ admits an alternating diagram $D$ shown in Figure~2
where $T_i$ are the tangles defined by the braids
$$
   T_i=\begin{cases}
       \sigma_1^{1-2n_{i,1}}\Big(\prod_{j=2}^{k_i}\sigma_2\sigma_1^{2-2n_{i,j}}\Big)
       \sigma_1^{-1}, &\text{$i$ is odd,}
       \\
       \sigma_2^{2n_{i,1}-1}\Big(\prod_{j=2}^{k_i}\sigma_1^{-1}\sigma_2^{2n_{i,j}-2}\Big)
       \sigma_2, &\text{$i$ is even,}
       \end{cases}
$$
as depicted in Figure~3 for odd $i$. Note that, for odd (resp. even) $i$,
all crossings of $T_i$ are positive (resp. negative).

\begin{figure}[h]
\centering
 {\epsfxsize=4.7in
\centerline{\epsfbox{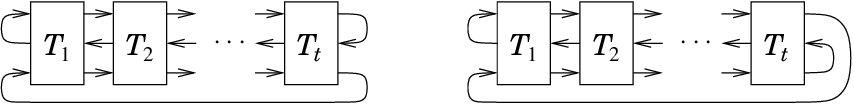}}}
  \centerline{$t$ is odd \hskip 50mm $t$ is even}
\caption{Alternating diagram $D$ for the rational link $L$.}
                        \label{fig: rem2}
 \end{figure}

\begin{figure}
\centerline{\epsfxsize=4.7in \epsfbox{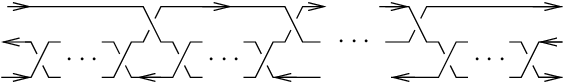}}
\centerline{\hskip 5mm $2n_{i,1}-1$ \hskip 14mm $2n_{i,2}-2$ \hskip 39mm $2n_{i,k_i}-1$}
 \caption{The tangle $T_i$ for an odd $i$.}
\end{figure}

Let $w=w(D)$ and $s=s(D)$ be the writhe and the number of Seifert circles of $D$.
Each tangle $T_i$ contributes $(-1)^{i-1}w_i$ to $w$, where
$w_i=1+\sum_j(2n_{i,j}-1)$. If $i$ is odd (resp. even), then $T_i$
contributes $w_i-k_i$ to $d^+$ (resp. to $d^-$). We have $s-(d^+ + d^-)=1$
(the Euler characteristic of a spanning tree).

Hence
\begin{equation}\label{3}
   s = t + 1 + \sum_{i=1}^t\sum_{j=1}^{k_i}2(n_{i,j}-1),
   \quad
   w = \frac{1-(-1)^t}2 + \sum_{i=1}^t(-1)^{i-1}\sum_{j=1}^{k_i}(2n_{i,j}-1), 
\end{equation}
\begin{equation}\label{4}
  d^+ = \lceil t/2 \rceil+\sum_{\text{$i$ odd}}\; \sum_{j=1}^{k_i}2(n_{i,j}-1), \qquad
  d^- =\lfloor t/2\rfloor+\sum_{\text{$i$ even}}\;\sum_{j=1}^{k_i}2(n_{i,j}-1). 
\end{equation}
By combining (\ref{2}) and (\ref{3}) we obtain
\begin{equation}\label{5}
   s-b = \sum_{i=1}^t\sum_{j=1}^{k_i}(n_{i,j}-1), \qquad
   w-e = \sum_{i=1}^t(-1)^{i-1}\sum_{j=1}^{k_i}(n_{i,j}-1).                   
\end{equation}
Recall that $\rr^\pm=\rr^\pm(D)$ are defined by
\begin{equation}\label{6}
   \rr^+ + \rr^- = s-b, \qquad \rr^+ - \rr^- = w-e.                         
\end{equation}
By combining (\ref{4}), (\ref{5}), and (\ref{6}) we obtain
$$
   2\rr^+ = d^+ - \lceil  t/2\rceil  \le d^+, \qquad
   2\rr^- = d^- - \lfloor t/2\rfloor \le d^-.
$$\end{proof}

\subsection{How to extract the proof of Proposition~\ref{propo} from \cite{deh}}\label{sec: dehl}

The paper \cite{deh} is devoted to a computation of the braid index of alternating links
presented by link diagrams of some specific forms.
An upper bound for the braid index is the number of Seifert circles.
A lower bound is given by Morton-Franks-Williams (MFW)
inequality (\cite{fw, m}). In many cases (those indicated in Theorem~\ref{thmo})
it is shown in \cite{deh} that the MFW bound is sharp. In each case, this is
done in \cite{deh} as follows. Given a reduced alternating diagram $D$ of a link $L$,
one chooses a certain collection $C$
of lone crossings, and applies the Murasugi-Przytycki move \cite{mp}
(MP-move) to each of them
(MP-moves are also depicted in \cite[Fig.~2]{t} and \cite[Fig.~11]{dhl}).
The number of Seifert circles of the resulting diagram $D'$
(in general, non-alternating) is equal to the braid index of $L$ and, moreover,
the number of the performed MP-moves at positive (negative) crossings is equal to
$\rr^+(D)$ (resp. $\rr^-(D)$).

A {\it constant-sign path of length} $n$ in $G_D$ is a sequence
$v_1,\dots,v_n$ of pairwise distinct vertices of $G_D$ such that each pair of
consecutive vertices $(v_i,v_{i+1})$ is connected by an edge, and all these
edges are of the same sign.
One can check that the vertices of $G_D$ corresponding to $C$
are always chosen in \cite{deh} in some pairwise disjoint constant-sign paths.
Moreover, at most $\lfloor(n-1)/2\rfloor$ crossings is chosen in each of
the paths of length $n$.
It is clear that any collection of pairwise disjoint paths is contained in some spanning tree. Thus we obtain (\ref{eq: a}) for all diagrams mentioned in Remark~\ref{rem: dhl},
in particular, this gives a proof of Proposition~\ref{propo}.

\end{document}